%%% 2007 %%%%
%%%%%%%%%%%%%%%%%%%%%%%%
\documentclass[12pt,leqno]{amsart}
\usepackage{amsfonts,amsmath,amssymb,amsthm, geometry}

\usepackage[all]{xy}

\topmargin 0cm     %1.5    %0.75
\headsep 1cm
\headheight 0cm
\evensidemargin 0.25cm
\oddsidemargin 0.25cm
 \textwidth 16cm        %16.5
 \textheight 21cm     % 21                  %\textheight 22.5cm

% THEOREM Environments ---------------------------------------------------
\newtheorem{theorem}{Theorem}[section]
\newtheorem{corollary}[theorem]{Corollary}
\newtheorem{lemma}[theorem]{Lemma}
\newtheorem{proposition}[theorem]{Proposition}
\theoremstyle{definition}
\newtheorem{definition}[theorem]{Definition}
\theoremstyle{remark}
\newtheorem{remark}[theorem]{\sc Remark}
\newtheorem{example}[theorem]{\sc Example}
%\theoremstyle{question}

%\theoremstyle{remark}

%\theoremstyle{remark}

%\numberwithin{equation}{section}

%\renewcommand{\subjclassname}{\textup{2000} Mathematics Subject Classification}

\renewcommand{\Box}{\square}    %\diamond

%--- A number of `math-words'

\newcommand{\diffeo}{{\rm{diffeo}}}

\newcommand{\proj}{{\rm{proj}}}

\newcommand{\Sing}{{\rm{Sing\hspace{2pt}}}}
\newcommand{\rank}{{\rm{rank\hspace{2pt}}}}

\newcommand{\ity}{{\infty}}

\newcommand{\e}{\varepsilon}
\newcommand{\m}{\setminus}
\newcommand{\fin}{\hspace*{\fill}$\Box$\vspace*{2mm}}

% quite a number of \mathcal 's

\newcommand{\cS}{{\mathcal S}}

% special

\newcommand{\bR}{{\mathbb R}}
\newcommand{\bC}{{\mathbb C}}
\newcommand{\bP}{{\mathbb P}}
\newcommand{\bN}{{\mathbb N}}

% bold-faces

\newcommand{\bv}{{\bf{v}}}
\newcommand{\bw}{{\bf{w}}}

\begin{document}
\title[Real map germs and open books]{Real map germs and higher open books}
%Open book decompositions and fibrations of real map germs

\author{\sc Raimundo Ara\'ujo dos Santos}
\address{Instituto de Ci\^encias Matem\'aticas e de Computa\c c\~ao,
Universidade de S\~ao Paulo,  Av. Trabalhador S\~ao-Carlense, 400 -
Centro Postal Box 668, S\~ao Carlos - S\~ao Paulo - Brazil
\newline Postal Code 13560-970, S\~ao Carlos, SP, Brazil.}
\email{rnonato@icmc.usp.br}
\author{\sc Mihai Tib\u ar}
\address{Math\'ematiques, UMR-CNRS 8524, Universit\'e des Sciences et Technologies de Lille,
59655 Villeneuve d'Ascq, France.}
\email{tibar@math.univ-lille1.fr}

\subjclass[2000]{Primary 32S55; Secondary 57Q45, 32C40, 32S60}

\keywords{real singularities, links, open book decompositions, stratifications}

\date{December 30, 2007}
\maketitle
%%%%%%%%%%%%%%%%%%%
\begin{abstract}
We present a general criterion for the existence of open book structures
defined by real map germs $(\bR^m ,0) \to (\bR^p ,0)$, where $m> p \ge 2$,
with isolated critical point. We show that this is satisfied by weighted-homogeneous maps.
We also derive sufficient conditions in case of map germs with isolated critical value.
\end{abstract}

\section{Introduction and results}\label{intro}

An \textit{open book structure} of a real manifold $M$ is a pair $(K, \theta)$
 consisting of a codimension two submanifold $K$ and a locally trivial smooth fibration $\theta : M\setminus K \to S^1$ over the circle $S^1 :=\partial B^2$ such that $K$ admits neighbourhoods $N$ diffeomorphic to $B^2 \times K$ 
for which $K$ is identified to $\{ 0\} \times K$ and the restriction  
$\theta_{|N\setminus K}$ is the following composition with the natural projections $N\setminus K  \stackrel{\diffeo}{\simeq} (B^2 \setminus \{ 0\}) \times K \stackrel{\proj}{\to} B^2 \setminus \{ 0\} \to S^1$. One says that $K$ is the \textit{binding} and that the (closures of) the fibers of $\theta$ are the \textit{pages} of the \textit{open book}.
  
Open book structures are known since long time but the term appears rather recently
in the paper by Winkelnkemper \cite{Wi}. They appear in the literature under various names: fibered links, Milnor or Lefschetz fibrations, spinnable structures, etc. Most recently, they occur as fundamental objects in symplectic and contact geometry, beginning with the pioneering studies by Arnold, Donaldson, Giroux \cite{Gi} and other authors, see e.g. the survey \cite{Et}.

We focus here on the existence of fibered links defined by
real analytic map germs. The complex setting has been intensely investigated by Milnor and Brieskorn in the 1960's and by many other authors ever since; this gives rise in particular to rich open book structures.

In his Princeton studies \cite{Mi} of complex hypersurface singularities,
Milnor included a short section about real analytic map germs
$\psi :(\mathbb R^m ,0) \to (\mathbb R^p ,0)$, where $m\ge p \ge 2$, in order to show the elusiveness of the real setting compared to complex one.
Let $V := \{ \psi = 0 \}$ denote the germ at $0$ of the zero locus of $\psi$, let
 $B^m_\e \subset \bR^m$ be the open ball of radius $\e$ at the origin
and $S^{m-1}_\e := \partial \overline{B_\e}$. We start from the following two results:

\smallskip
\noindent
\textbf{Fact 1.} It is shown in \cite[page 97-99]{Mi} that if $\psi$ has an isolated critical point at $0\in \mathbb{R}^{m}$ then for any small enough $\e >0$, the complement $S^{m-1}_\e \m K_\e$ of the link $K_\e := S_\e \cap V$ is the total space of a smooth fiber bundle over the unit sphere $S^{p-1}$.
Whenever $K_\e$ is nonempty, one says that it is a \textit{fibered link} (see e.g. \cite{Du}).  Moreover, Milnor's proof shows that the sphere $S^{m-1}_\e$ is endowed with an \textit{open book decomposition with \textit{binding} $K_\e$} as in the above definition, extended in the sense that the binding $K_\e$ is of \textit{higher codimension}\footnote{this also explains the occurrence of the word ``higher'' in the title of the paper} $p\ge 2$ instead of 2.

Milnor's proof gives the main construction of a fiber bundle:
\begin{equation}\label{eq:milnormap1}
 S^{m-1}_\e \m K_\e \to S^{p-1}
\end{equation}
 by integrating a vector field\footnote{for more details, see Step 2 of the proof of Theorem \ref{t:main1}}, without providing an explicit fibration map. One can only say (see the first part of Milnor's proof) that the fibration map is $\frac{\psi}{\|\psi \|}$ in some small enough neighbourhood $N_\e$ of $K_\e$.

\noindent
\textbf{Fact 2.} In case $p=2$ and $\psi$ is the pair of functions ``the real and the imaginary part'' of a holomorphic function germ $f : (\bC^n,0) \to (\bC,0)$, then Milnor \cite[\S 4]{Mi} proved that the natural map:
\begin{equation}\label{eq:milnormap2}
 \frac{\psi}{\|\psi \|} : S_\e^{m-1}\m K_\e \to S^{1}
\end{equation}
is itself a $C^\infty$ locally trivial fibration\footnote{called ``strong Milnor fibration'' by some authors, see e.g. \cite{RSV}.}, where $m$ is here $2n$. This explicit map provides another type of open book decomposition of the odd dimensional spheres $S_\e^{2n-1}$.

\smallskip
These facts raise the following two natural basic questions:

\smallskip
\noindent
(A). \textit{Given a real map $\psi$, under what more general conditions each of the two above fibrations exist ? In particular, which maps $\psi$ with isolated singularity possess a locally trivial fibration} (\ref{eq:milnormap2}) ?

\noindent
(B). \textit{Whenever both types of open book decompositions exist for a given $\psi$, are the
fibrations} (\ref{eq:milnormap1}) \textit{and} (\ref{eq:milnormap2}) isomorphic \textit{?}

\smallskip

The first necessary condition for the existence of both types of fibrations is 
the inclusion $\Sing \psi \subset V$.
This inclusion is true for instance in the setting of Fact 2 whenever $\psi$ is defined by a holomorphic function $f$. It is however not true for any real map germ $\psi$.

Regarding the question (A), Milnor exhibited an example with $m=p=2$ and $K= \emptyset$, cf  \cite[p. 99]{Mi}, showing that, even if the first type fibration exists, the map (\ref{eq:milnormap2}) might not be a fibration.
Nevertheless, more recently several authors obtained, in case $p=2$ and $\Sing \psi \subset \{ 0\}$, sufficient conditions under which (\ref{eq:milnormap2}) is a fibration \cite{Ja1, Ja2, RS, RSV, Se1, dS} and provided examples showing that the class of real map germs $\psi$ with isolated singularity satisfying them is larger than the class of holomorphic functions $f$.

 As for question (B), Milnor showed in \cite[\S 4]{Mi} that in case of a holomorphic germ $f$ with isolated singularity the two types of open book decompositions coincide. It is up to now not clear what happens beyond this class of maps and in particular what meaningful conditions should be  imposed to a map $\psi$ such that the answer be ``yes''.

We address here both questions.
Let us first observe that in order to become a smooth fibre bundle, the map (\ref{eq:milnormap2}) has to be a submersion\footnote{precisely this condition fails
 in Milnor's \cite[example p. 99]{Mi}.}. This condition would be also sufficient if  $\frac{\psi}{\|\psi \|}$ were a proper map, by the well-known Ehresmann's theorem.
This is the case when $V= \{ 0\}$, hence $K_\e = \emptyset$.

%Study the strange case $K_\e = \emptyset$. What is Im f  ?

 Nevertheless, the properness fails whenever $\dim V >0$ since $S_\e^{m-1}\m K_\e$ is not compact.
Our first result shows that, in case $\Sing \psi \subset \{ 0\}$, the above necessary condition is also sufficient, despite the non-properness of $\frac{\psi}{\|\psi \|}$.  This completes the answer to Milnor's question (i.e. the second part of question (A)), recovering the above cited results (see also Remark \ref{r:thmain}) and moreover providing a conceptual explanation for the general case $p\ge 2$.

%Moreover, we extend here the setting to nonisolated singularities: we replace the assumption ``isolated singular point'' by ``isolated critical value''.

\begin{theorem} \label{t:main1}
Let $\psi :(\mathbb R^m ,0) \to (\mathbb R^p ,0)$, for $m > p \ge 2$,  be a
real analytic map germ with isolated critical point at $0\in
\mathbb{R}^{m}$.

The map \rm (\ref{eq:milnormap2}) \it defines an open book decomposition of $S^{m-1}_\e$ with binding $K_\e$ if and only if this map is a submersion for any $\e >0$ small enough.
\end{theorem}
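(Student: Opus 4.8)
The plan is to prove the nontrivial direction: assuming $\varphi := \psi/\|\psi\|$ is a submersion on $S^{m-1}_\e \setminus K_\e$ for all small $\e$, produce an open book decomposition. The one direction — an open book forces submersivity — is immediate from the definition, since a locally trivial fibration over $S^{p-1}$ by a smooth map is in particular a submersion. So the work is all in the converse, and the obstacle is exactly the non-properness of $\varphi$ noted in the introduction: Ehresmann does not apply directly because $S^{m-1}_\e \setminus K_\e$ is not compact, its ``missing boundary'' being the link $K_\e$.

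First I would fix a small $\e>0$ and use Milnor's Fact 1: by \cite{Mi}, for $\e$ small the link $K_\e$ has a tubular neighbourhood $N_\e$ in $S^{m-1}_\e$ diffeomorphic to $B^p \times K_\e$, and on $N_\e \setminus K_\e$ the map $\varphi$ is already conjugate to the projection $(B^p\setminus\{0\})\times K_\e \to B^p \setminus\{0\} \to S^{p-1}$; in particular $\varphi|_{N_\e\setminus K_\e}$ is a trivial fibration with the prescribed open-book local model near the binding. So the issue is only to extend this to a locally trivial fibration on all of $S^{m-1}_\e \setminus K_\e$. The strategy is to work on the compact manifold with corners $W := (S^{m-1}_\e \setminus \operatorname{int} N_\e')$, where $N_\e' \subset N_\e$ is a slightly smaller closed tube, and to show $\varphi|_W : W \to S^{p-1}$ is a proper submersion that is, moreover, a submersion up to the boundary $\partial N_\e'$. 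Then Ehresmann's theorem for manifolds with boundary gives that $\varphi|_W$ is a locally trivial fibration; gluing this to the already-trivial collar piece $N_\e' \setminus K_\e$ — the gluing being compatible because on the overlap $N_\e \setminus N_\e'$ both descriptions are the standard projection model — yields the global locally trivial fibration $S^{m-1}_\e \setminus K_\e \to S^{p-1}$, and the local structure near $K_\e$ is precisely the open-book model by construction. One must check that the boundary $\partial N_\e'$ is transverse to the fibers of $\varphi$, i.e. that $\varphi$ restricted to $\partial N_\e'$ is still a submersion; this holds because $\partial N_\e' \cong S^{p-1}\times K_\e$ sits inside the product region where $\varphi$ is literally the projection to $S^{p-1}$, which is obviously a submersion on each sphere-times-$K_\e$ slice.

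The main obstacle, and the point requiring genuine care, is that properness of $\varphi|_W$ and the collar analysis both rely on understanding $\varphi$ near $K_\e$ — that is, on the ``$\varphi = \psi/\|\psi\|$ near the link'' assertion from Fact 1, together with the hypothesis that $0$ is an isolated critical point of $\psi$. Concretely, one has to verify: (i) that the tube $N_\e$ can be chosen so that on $N_\e \setminus K_\e$ the map $\varphi$ has no critical points and is genuinely conjugate to the projection — this uses the curve-selection-lemma/Milnor-style argument that $\|\psi\|$ has no critical points of $\varphi$ on a punctured neighbourhood of $K_\e$ in the sphere, a consequence of isolatedness of $\Sing\psi$; and (ii) that $W$ with its corner structure along $\partial N_\e'$ admits an Ehresmann-type trivialization, for which the standard reference is the version of Ehresmann's fibration theorem for proper submersions of manifolds with boundary (or with corners) that are submersive on the boundary strata. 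Both ingredients are classical once set up, so I expect the proof to consist mainly of: (1) recalling and mildly strengthening Fact 1 to get a good tube and the no-critical-points statement on the punctured tube; (2) excising a smaller tube to obtain a compact manifold-with-boundary $W$ on which $\varphi$ is a proper submersion, submersive up to $\partial W$; (3) applying Ehresmann; (4) gluing back the collar and reading off the open-book model. The delicate step is step (1)–(2): ensuring the excision is compatible, i.e. that the fibration one gets on $W$ matches the standard collar fibration on the overlap so the pieces glue to a \emph{smooth locally trivial} fibration rather than merely a topological one.
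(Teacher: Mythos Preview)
Your argument is correct but follows a genuinely different route from the paper. The paper compactifies by \emph{blowing up} along $V$: it introduces the analytic set $X \subset B^m_{\e_0} \times \bP^{p-1}_\bR$ defined by the rank condition $\rank\begin{pmatrix} P_1(x) & \cdots & P_p(x)\\ s_1 & \cdots & s_p\end{pmatrix} < 2$, so that the second projection $\pi_| : X \cap (S^{m-1}_\e \times \bP^{p-1}) \to \bP^{p-1}$ is proper. It equips $X$ with a canonical Whitney stratification, observes that the submersivity hypothesis on $\psi/\|\psi\|$ is precisely transversality of $S^{m-1}_\e \times \bP^{p-1}$ to the open stratum $X\m (V\times\bP^{p-1})$, and then applies Thom's First Isotopy Lemma to conclude that $\pi_|$ is a stratified locally trivial fibration; lifting through the double cover $S^{p-1}\to\bP^{p-1}$ gives the desired fibration of $\psi/\|\psi\|$. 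You instead compactify by \emph{excising a tube} around $K_\e$: you take Milnor's local tube model near the link as a black box, cut out a slightly smaller tube to get a compact manifold with boundary $W$, and apply Ehresmann's theorem with boundary to $\psi/\|\psi\|$ on $W$, then glue. Your approach is more elementary---no Whitney stratifications, no Thom isotopy---and stays close to Milnor's original toolkit; the paper's blow-up framework is heavier but buys uniformity: the same stratified setup extends directly to the isolated-critical-\emph{value} case treated later, where $K_\e$ can be singular and no smooth tubular neighbourhood is available, so your excision step would break down.
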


Examples where Theorem  \ref{t:main1} holds are \ref{e:2}, \ref{e:3} and last part of \ref{e:1}. Let us point out that Milnor's example \cite[p. 99]{Mi} does not satisfy the criterion of Theorem \ref{t:main1}. Indeed, by direct computations one may see that the equivalent condition given in Remark \ref{r:cond} fails, therefore (\ref{eq:milnormap2}) is not a submersion for small spheres. However Milnor's example has $\dim V =0$ so it does not fit the hypothesis $m>p$ of the above Theorem either. Let us give here a similar example which satisfies this hypothesis.

\begin{example}\label{e:1}
$\psi : (\mathbb R^3 ,0) \to (\mathbb R^2 ,0)$,
\[ \psi = (P,Q) :=
(y(x^2+y^2+z^2)+x^2, x). \]
The function $(P,Q)$ has isolated singularity at origin and the link is $K=\{x=y =0\}\cap S^{2}_{\e}$, i.e. two antipodal points.

We determine the set $M$ defined at (\ref{eq:main2})
by computing the $2\times 2$ minors of the $2\times 3$ matrix having $P\nabla Q-Q\nabla P$
on the first line and $(x,y,z)$ on the second line, where $\nabla P$ and $\nabla Q$ denote the gradients.
We find that $\bar M \cap \{ z= 0\} =  \{(x^2+y^2)^2=yx^2 \}$, which is a non-degenerate curve through the origin. This shows that the condition $\bar M \not\supset \{ 0\}$ fails and therefore, by Remark \ref{r:cond}) and
 Theorem \ref{t:main1}, the map (\ref{eq:milnormap2}) does not define an open book decomposition.

However, let us also remark that if instead of $P = y(x^2+y^2+z^2) + x^2$ we take $P = y(x^2+y^2+z^2)$ in the above example, i.e. taking the quadratic part out, then the condition $\bar M \not\supset \{ 0\}$ is satisfied.
\end{example}

Furthermore, we show in Section \ref{weighted-hom} that the criterion defined in Theorem \ref{t:main1}
is satisfied whenever $\psi$ is weighted-homogeneous, and that the fibrations (\ref{eq:milnormap1}) and (\ref{eq:milnormap2}) are equivalent.

In the last section we address the same questions (A) and (B) in the more general setting of maps
$\psi : (\bR^m, 0) \to (\bR^p,0)$ with an \textit{isolated critical value}. Related questions have been recently studied in this setting, see \cite{PS, Mas}.
We give sufficient conditions for the existence of the open book decompositions (\ref{eq:milnormap1}) and (\ref{eq:milnormap2}).

%%%%%%%%%%%%%%%%%%%%%%%%%%%%%%%%%%%%%%%%%%%%%
%%%%%%%%%%%%%%%%%%%%%%%%%%%%%%%%%%%%%%%%%%%%%%%%%%

\section{Preparations}\label{prep}

The following construction is inspired by the method introduced in Seade's paper \cite{Se1} and further used in \cite{RS}, and by the study of nongeneric pencils
of hypersurfaces (or meromorphic functions with singularities in base locus) after \cite{ST, ST2, Ti1, Ti2,  Ti}.

Let $D_\delta \subset \bR^p$ be a closed ball of radius $\delta >0$ at the origin.

Let $\psi(x)  = (P_1, \ldots, P_p)(x)$. Let $[s] := [s_1: \cdots : s_p]$ some point of the real projective space $\bP^{p-1}_{\bR}$. For some $\e_0 >0$ we define the following analytic set:
%%%%%%%%%%%%%%%%%%%%%%%%%%
\begin{equation}\label{eq:defX}
 X := \lbrace (x, [s_1: \cdots : s_p]) \in B^m_{\e_0}\times \bP^{p-1}_{\bR} \mid \rank
\left[ \begin{array}{ccc}
P_1(x) & \cdots & P_p(x) \\
s_1 & \cdots & s_p
\end{array} \right] < 2
 \rbrace ,
\end{equation}
which of course depends on the radius ${\e_0}$. Let
$\pi : X \to \bP^{p-1}_{\bR}$ be the natural projection and
let $X_{[s]}$ denote the fibre of $\pi$ over $[s]\in \bP^{p-1}$.
Let us observe that $X$ contains  $V \times \bP^{p-1}$.
For any $[s]\in \bP^{p-1}$, the set $X_{[s]}$ contains $V$. Actually $X_{[s]}\m V$ is the  disjoint union of two subsets separated by $V$, namely $X_{[s]}^+ := (\frac{\psi}{\| \psi\|})^{-1}(\frac{s}{\|s\|})$ and $X_{[s]}^- := (\frac{\psi}{\| \psi\|})^{-1}(-\frac{s}{\|s\|})$, where $\frac{s}{\|s\|}$ is the unit vector of the line represented by $[s]\in \bP^{p-1}$.

The projection on the first factor $p :  X \to B_{\e_0}$ is actually a blow-up of $B_{\e_0}$ along $V$. This map restricts to an analytic isomorphism:
\begin{equation}\label{eq:blowup}
 p_| : X \m (V \times \bP^{p-1}) \to B^m_{\e_0} \m V.
\end{equation}

\subsection{The singular locus of $X$.} \label{ss:sing}
 Let us first recall that the singular locus $\Sing \psi$ of $\psi$ is the set-germ at $0$ of the points $x\in \bR^m$, where the gradient vectors $\nabla P_1(x), \ldots , \nabla P_p(x)$ are linearly dependent.
 In some fixed chart $\{ s_i=1\}$ of $\bP^{p-1}$, the analytic set $X$ is defined by $p-1$
independent equations $f_{ij} := s_j P_i - P_j =0$, for $j\not= i$. Then the singular locus of
$X$ is by definition the set of points $(x, [s])\in X$ where the  gradients with respect to $x$ and to $s_j$ (for $j\not= i$) of the $p-1$ functions $f_{ij}$ are linearly dependent.

From these observations, by direct computations we get the following inclusions which are not equalities in general:

\begin{lemma}\label{l:singp}
 \[ \Sing X \subset  (V \cap \Sing \psi) \times \bP^{p-1}
\]
and, for any fixed $[s]\in \bP^{p-1}$:

\[ \Sing X_{[s]} \subset X_{[s]}\cap \Sing \psi.
 \]  \fin
 \end{lemma}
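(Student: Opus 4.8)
The plan is to apply the Jacobian criterion one affine chart of $\bP^{p-1}_{\bR}$ at a time and to reduce the whole statement to an elementary remark in linear algebra. Fix the chart $\{s_i=1\}$, in which, as recalled in the text, $X$ is cut out by the $p-1$ equations $f_{ij}:=s_jP_i-P_j=0$, $j\neq i$, in the coordinates $x=(x_1,\dots,x_m)$ together with $(s_j)_{j\neq i}$. First I would write out the Jacobian matrix of the tuple $(f_{ij})_{j\neq i}$ with respect to all of these coordinates: the row indexed by $j$ has $x$-part $s_j\nabla P_i-\nabla P_j$ and $s$-part $P_i(x)\,\be_j$, where $\be_j$ is the basis covector dual to $s_j$. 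In particular the $s$-block of this $(p-1)\times(m+p-1)$ matrix is exactly $P_i(x)\cdot\Id_{p-1}$.

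The argument now hinges on a dichotomy. If $P_i(x)\neq 0$, then the $s$-block alone already has rank $p-1$, so the full Jacobian has maximal rank $p-1$ and $(x,[s])$ is a smooth point of $X$. Hence $\Sing X$ can meet this chart only over points with $P_i(x)=0$; but on $X$ the rank $<2$ condition of (\ref{eq:defX}) forces $\bigl(P_1(x),\dots,P_p(x)\bigr)=P_i(x)\cdot(s_1,\dots,s_p)$, so $P_i(x)=0$ makes all the $P_j(x)$ vanish, i.e. $x\in V$. This already yields $\Sing X\cap\bigl(B^m_{\e_0}\times\{s_i=1\}\bigr)\subset V\times\{s_i=1\}$.

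It remains to upgrade ``$x\in V$'' to ``$x\in V\cap\Sing\psi$''. Over a point of $V$ one has $P_i(x)=0$, so the $s$-block of the Jacobian vanishes identically and $(x,[s])\in\Sing X$ holds if and only if the $p-1$ vectors $s_j\nabla P_i-\nabla P_j$, $j\neq i$, are linearly dependent in $\bR^m$. Here I would use the elementary fact that if $\nabla P_1(x),\dots,\nabla P_p(x)$ are linearly independent, then so are those $p-1$ combinations: indeed a vanishing combination $\sum_{j\neq i}c_j\bigl(s_j\nabla P_i-\nabla P_j\bigr)=0$ is a linear relation among $\nabla P_1,\dots,\nabla P_p$ in which the coefficient of $\nabla P_j$ equals $-c_j$, forcing every $c_j=0$. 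Contrapositively, a singular point of $X$ lying over this chart has $x\in\Sing\psi$; combining with $x\in V$ and letting $i$ range over $1,\dots,p$ (the charts $\{s_i=1\}$ cover $\bP^{p-1}$) gives the first inclusion.

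For the second inclusion the same computation applies verbatim with $[s]$ frozen: $X_{[s]}$ is cut out in $B^m_{\e_0}$ by the equations $f_{ij}=s_jP_i-P_j$ regarded now as functions of $x$ alone, and its Jacobian is precisely the $x$-block above, with rows $s_j\nabla P_i-\nabla P_j$, $j\neq i$. By the same linear algebra fact --- this time without any hypothesis on $P_i(x)$ --- a point where these rows are linearly dependent must have $x\in\Sing\psi$, whence $\Sing X_{[s]}\subset X_{[s]}\cap\Sing\psi$. The single point I would be careful to phrase precisely, being the only place where the projective geometry genuinely enters rather than the chart equations alone, is the implication ``$P_i(x)=0$ on $X\ \Rightarrow\ x\in V$'', which rests on the rank $<2$ condition of (\ref{eq:defX}); the rest is routine bookkeeping over the finitely many charts.
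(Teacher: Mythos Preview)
Your proof is correct and is precisely the ``direct computation'' the paper alludes to but omits: working in the chart $\{s_i=1\}$, writing out the Jacobian of the defining equations $f_{ij}=s_jP_i-P_j$, and reading off from its block structure that a singular point forces $x\in V$ (via the $s$-block $P_i\cdot\Id_{p-1}$) and $x\in\Sing\psi$ (via linear dependence of the rows $s_j\nabla P_i-\nabla P_j$). There is nothing to add---this is exactly the intended argument, carried out in full.
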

 This approximation is however enough for drawing the following immediate but useful  consequences. In case $p=2$, a sharper description of the singular loci is given in \S \ref{ss:case_p=2}.

\begin{corollary} \label{c:submanif} \
\begin{enumerate}
 \item If $\psi$ has an isolated critical point at $0$ then $\Sing X \subset \{ 0\} \times \bP^{p-1}$.
In particular the sets
 $(V \m \{ 0\}) \times \bP^{p-1}$
and $X_{[s]} \m (0,[s])$ are analytic submanifolds of $X \m (\{ 0\} \times \bP^{p-1})$.
\item If $\psi$ has an isolated critical value at $0$ then  $\Sing X \subset V \times \bP^{p-1}$.
In particular the sets
  $(V\times \bP^{p-1}) \m \Sing X$ and $X_{[s]} \m \Sing X$ are analytic submanifolds
of $X \m \Sing X$.
\end{enumerate}
\fin
\end{corollary}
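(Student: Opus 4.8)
The plan is to read off both statements about $\Sing X$ directly from Lemma \ref{l:singp} by translating the two hypotheses, and then to upgrade them to the ``submanifold'' assertions using the standard fact that the regular locus of a real analytic set is an analytic manifold, together with the Jacobian criterion for $V$.

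For part (a), that $\psi$ has an isolated critical point at $0$ means precisely $\Sing\psi\subseteq\{0\}$ as a set-germ; hence $V\cap\Sing\psi\subseteq\{0\}$, and the first inclusion of Lemma \ref{l:singp} gives $\Sing X\subseteq\{0\}\times\bP^{p-1}$. Since $\psi(0)=0$ we have $0\in V\subseteq X_{[s]}$, so the second inclusion of Lemma \ref{l:singp} gives $\Sing X_{[s]}\subseteq X_{[s]}\cap\Sing\psi\subseteq\{(0,[s])\}$. For part (b), that $\psi$ has an isolated critical value at $0$ means $\psi(\Sing\psi)\subseteq\{0\}$, i.e. $\Sing\psi\subseteq V$; thus $V\cap\Sing\psi=\Sing\psi$, and Lemma \ref{l:singp} yields $\Sing X\subseteq\Sing\psi\times\bP^{p-1}\subseteq V\times\bP^{p-1}$. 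These are the first assertions of the two items.

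For the ``in particular'' clauses, note that in case (a) the set $X\m(\{0\}\times\bP^{p-1})$ meets no point of $\Sing X$, hence is an analytic manifold; inside it, $(V\m\{0\})\times\bP^{p-1}$ and $X_{[s]}\m(0,[s])$ are analytic subsets, and it suffices to check that they have no singular points of their own, since a nonsingular analytic subset of an analytic manifold is an analytic submanifold. For the first this is the Jacobian criterion: on $V\m\{0\}$ the gradients $\nabla P_1,\dots,\nabla P_p$ are independent, so $V\m\{0\}$ is a smooth regular level set of $\psi$ and its product with $\bP^{p-1}$ is smooth; for the second, $\Sing X_{[s]}\subseteq\{(0,[s])\}$ by the above, so $X_{[s]}\m(0,[s])$ is nonsingular. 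Case (b) is handled the same way once one knows that $(V\times\bP^{p-1})\m\Sing X$ and $X_{[s]}\m\Sing X$ are nonsingular analytic sets. Off the exceptional set this is clear: by \eqref{eq:blowup} the projection $p$ identifies $X\m(V\times\bP^{p-1})$ with $B^m_{\e_0}\m V$, under which $\pi$ becomes $x\mapsto[\psi(x)]$, which is a submersion there because $x\notin V$ forces $x\notin\Sing\psi$; hence the traces of the fibres $X_{[s]}$ off $V\times\bP^{p-1}$ are smooth. At a point $(x,[s])$ of $V\times\bP^{p-1}$ not lying in $\Sing X$, choose a chart $\{s_i=1\}$ around it; since $\psi(x)=0$, the $\partial_{s_j}$-derivatives of the local equations $f_{ij}=s_jP_i-P_j$ ($j\ne i$) vanish at $(x,[s])$, so the Jacobian of $X$ there is the block with rows $s_j\nabla P_i(x)-\nabla P_j(x)$, and $(x,[s])\notin\Sing X$ means this block has full rank $p-1$; the same rows then exhibit $X_{[s]}$ as a smooth $(m-p+1)$-dimensional submanifold of the germ of $X$ at $(x,[s])$, and one checks that $V\times\bP^{p-1}$, which near $(x,[s])$ is the analytic subset $\{P_i=0\}$ of $X$, is nonsingular there.

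I expect the one genuinely computational point — the main, if modest, obstacle — to be this last local analysis along the exceptional divisor in case (b): one has to be careful that it is the full rank of the reduced Jacobian of the $f_{ij}$, not merely of $d\psi$, that simultaneously controls smoothness of $X$, of the fibres $X_{[s]}$, and of $V\times\bP^{p-1}$ along $\{\psi=0\}$, and the isolated-critical-value hypothesis is used once more to see that the analytic subset $\{P_i=0\}$ of $X$ is indeed nonsingular at such points. In case (a) no such care is needed: everything off $\{0\}\times\bP^{p-1}$ automatically lies in the regular locus of $X$, and $V\m\{0\}$ is a plain regular level set of $\psi$.
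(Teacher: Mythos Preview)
Your proposal is correct and follows exactly the route the paper intends: the corollary is marked as immediate from Lemma~\ref{l:singp}, and you simply translate the hypotheses ``isolated critical point'' and ``isolated critical value'' into $\Sing\psi\subseteq\{0\}$ and $\Sing\psi\subseteq V$ respectively, then read off the inclusions. The paper gives no further argument for the submanifold assertions, so your additional local analysis in part~(b)---including the honest flag that the Jacobian bookkeeping along $V\times\bP^{p-1}$ is the one point requiring care---goes beyond what the paper supplies rather than diverging from it.
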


%We immediately deduce from this lemma that the projection $\pi$ is a submersion on the manifold %$X\m V \times \bP^1$. Let us show some other consequences.

\subsection{The canonical stratification of $X$.}\label{ss:str}
 We use here the theory of Whitney stratified spaces, especially the existence of Whitney stratifications on any semi-analytic space. We do not require that a stratum is connected.
 Let us endow the analytic space $X$ with a Whitney regular stratification $\cS$, as follows.
Let us first recall that $X \subset B_{\e_0} \times \bP^{p-1}$ and all the analytic subsets of $X$ defined before are also well defined as
set germs at $\{ 0\} \times \bP^{p-1}$. In particular each fibre $X_{[s]}$ of $\pi$ can be viewed as a germ at the point $\{ 0\} \times [s]$.

We assume that either $0\in \bR^m$ is an isolated critical point or, more generally,  that $0\in \bR^p$ is an isolated critical value of $\psi$. Then Corollary \ref{c:submanif} shows that, in these both settings,
 $X \m (V \times \bP^{p-1})$ is an analytic manifold and an open dense subset of $X$---
this is defined as the highest dimensional stratum of $\cS$.
Next, we set $(V\times \bP^{p-1}) \m \Sing X$ as another stratum:  since it is an analytic submanifold of $X \m \Sing X$,  the couple of strata
$X \m (V \times \bP^{p-1})$ and $(V\times \bP^{p-1}) \m \Sing X$ automatically satisfy the Whitney \textbf{a}- and \textbf{b}-regularity conditions.

One may further stratify the analytic set $\Sing X$ such that the strata satisfy the Whitney regularity conditions.  By work of \L ojasiewicz and Mather  \cite{Lo, Ma1, Ma2}, there is a well-known algorithm to stratify
a semi-analytic set; this is described for instance \cite{GWPL}, see also \cite{Wa} for the main ideas. We get in this way the less fine (or, in other words, the most economical) Whitney stratification having $X \m (V \times \bP^{p-1})$ as the highest dimensional stratum.

\begin{definition} \label{d:str}
For a function germ $\psi$ with isolated critical value, let $\cS$ denote the Whitney regular stratification of $X$ defined above. We call it \textit{the canonical stratification of $X$}.
\end{definition}

Before giving the proof of the main theorem, let us look closer to the case $p=2$, which has been also addressed before by Jacquemard, Ruas, dos Santos and Seade in particular, see our references.

%%%%%%%%%%%%%%%%%%%%%%%%%%%%%%%%%%%%%%
%%%%%%%%%%%%%%
\subsection{The particular case $p=2$.}\label{ss:case_p=2}

 Denoting $\psi(x) := (P,Q)(x)$, our set $X$ is then\footnote{The fibres $X_{[s]}$ of $\pi$ have been considered from another viewpoint in \cite{RS, Se1, Se2} and denoted by $M_\theta$ or $X_\theta$, for some angle $\theta \in [0,\pi)$.} the following hypersurface in $B^m_{\e_0}\times \bP^1_{\bR}$:
\[ X := \{ (x, [s:t]) \in B^m_{\e_0}\times \bP^1_{\bR} \mid t\hspace{1pt}P(x) - s\hspace{1pt}Q(x) =0 \}.
\]
We then get the following precise description of the singular loci:
\begin{lemma}\label{l:sing2}
 $\Sing X = (V \times \bP^1) \cap \{ t \hspace{1pt}\nabla P(x) - s\hspace{1pt}\nabla Q(x) = 0\}$ and,  for any fixed $[s:t]\in \bP^1$,
$\Sing X_{[s:t]} = \{ t \hspace{1pt}\nabla P(x) - s\hspace{1pt}\nabla Q(x) = 0\}$.
\fin
\end{lemma}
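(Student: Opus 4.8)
The plan is to use directly the definition of singular locus recalled in \S\ref{ss:sing}: in the case $p=2$ the space $X$ is cut out, locally in each affine chart of $\bP^1_\bR$, by a \emph{single} analytic equation, so its singular locus is the common zero set of that equation together with all its partial derivatives — with respect to the $x$-variables and to the affine coordinate on $\bP^1_\bR$. Thus the whole proof reduces to two elementary gradient computations, one in each standard chart of $\bP^1_\bR$, followed by a gluing argument; the fibre statement follows by the same computation with $[s:t]$ held fixed.

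First I would work in the chart $\{ t\neq 0\}$ with affine coordinate $\sigma := s/t$. There $X$ is defined by $f(x,\sigma) := P(x) - \sigma Q(x) = 0$, with total gradient $\bigl(\nabla_x f,\ \partial_\sigma f\bigr) = \bigl(\nabla P(x) - \sigma\nabla Q(x),\ -Q(x)\bigr)$. Hence a point of $X$ in this chart is singular if and only if $Q(x) = 0$ and $\nabla P(x) - \sigma\nabla Q(x) = 0$; combined with the defining equation $P(x) - \sigma Q(x) = 0$ this also forces $P(x) = 0$, i.e. $x\in V$. So on this chart $\Sing X = (V\times\{ t\neq 0\})\cap\{\nabla P(x) - \sigma\nabla Q(x) = 0\}$. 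The computation in the chart $\{ s\neq 0\}$, with coordinate $\tau := t/s$ and defining equation $\tau P(x) - Q(x) = 0$, is completely symmetric and gives $\Sing X = (V\times\{ s\neq 0\})\cap\{\tau\nabla P(x) - \nabla Q(x) = 0\}$.

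Next I would note that the two local descriptions patch: the condition $t\nabla P(x) - s\nabla Q(x) = 0$ is homogeneous of degree one in $(s,t)$, so its zero locus is a well-defined subset of $\bR^m\times\bP^1_\bR$, and on $\{ t\neq 0\}$ (resp. $\{ s\neq 0\}$) it restricts precisely to the condition found above. This yields the first equality $\Sing X = (V\times\bP^1)\cap\{ t\nabla P(x) - s\nabla Q(x) = 0\}$. For the fibre $X_{[s:t]}$ I would run the same argument with $[s:t]$ now a fixed point: in a chart where, say, $t\neq 0$, $X_{[s:t]}$ is the hypersurface $\{ P(x) - \sigma Q(x) = 0\}$ in $B^m_{\e_0}$ with $\sigma = s/t$ a constant, whose singular locus is $X_{[s:t]}\cap\{\nabla P(x) - \sigma\nabla Q(x) = 0\}$; homogenizing in $(s,t)$ gives $\Sing X_{[s:t]} = X_{[s:t]}\cap\{ t\nabla P(x) - s\nabla Q(x) = 0\}$, which is the asserted formula, the intersection with $X_{[s:t]}$ being understood since $\Sing X_{[s:t]}\subset X_{[s:t]}$ by definition.

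There is no serious obstacle here; the only points needing care are (i) reading the gradient conditions projectively, via the homogeneity in $(s,t)$, so that the chart-wise answers glue to a single intrinsic statement, and (ii) the bookkeeping remark that in the computation for $X$ — as opposed to for the fibre — the extra derivative $\partial_\sigma f = -Q$ is exactly what upgrades the conclusion ``$x$ is critical for the pencil direction $[s:t]$'' to ``$x\in V$ and $[s:t]$ is that distinguished direction'', which is precisely the sharpening that Lemma \ref{l:sing2} provides over the coarser inclusions of Lemma \ref{l:singp}.
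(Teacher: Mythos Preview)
Your argument is correct and is precisely the direct computation the paper has in mind: the lemma is stated without proof (the $\Box$ follows immediately), and the surrounding text in \S\ref{ss:sing} and \S\ref{ss:case_p=2} already sets up the chart $\{s_i=1\}$, the single defining equation $tP - sQ$, and the definition of the singular locus via vanishing of the full gradient, so your chart-by-chart gradient computation and the homogeneity/gluing remark are exactly what is being left to the reader. Your observation that the fibre statement should be read as $\Sing X_{[s:t]} = X_{[s:t]}\cap\{t\nabla P - s\nabla Q = 0\}$ is also the correct reading of the paper's slightly elliptical formula.
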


\begin{remark}\label{r:cond}
 The assumption that the map (\ref{eq:milnormap2}) is a submersion for any $\e >0$ small enough,
 is equivalent to the fact that the vector $\omega(x) := P(x)\hspace{1pt} \nabla Q(x) - Q(x)\hspace{1pt} \nabla P(x)$ and the position vector $x$ are independent, for any $x\in B_\e\m V$ and small enough $\e >0$.

In other words, there exist some small $\e >0$ such that for any $x\in B_\e\m V$, we have the following inequality:
\begin{equation}\label{eq:main}
 \left| \left\langle \frac{\omega(x)}{\| \omega(x)\| }, \frac{x}{\|x\| } \right\rangle  \right| < 1.
\end{equation}

Furthermore, this is equivalent to the fact that $\bar M \not\supset \{ 0\}$, where
\begin{equation}\label{eq:main2}
 M:= \{ x\in \bR^m \m V \mid  \exists \lambda \not= 0 \mbox{ such that } P(x)\hspace{1pt} \nabla Q(x) - Q(x)\hspace{1pt}\nabla P(x) = \lambda x \}.
\end{equation}
\end{remark}
%%%%%%%%%%%%%%%%%%%%%%%%%%%%%%%%%%%%%%%%%%%%%%%%
%%%%%%%%%%%%%%%%%%%%%%%%%%%%%%%%%%%%%%%%

\section{Proof of Theorem \ref{t:main1}}

The case $V= \{ 0\}$ has been already discussed in \S \ref{intro}. It was also remarked that a necessary condition for (\ref{eq:milnormap2}) to define an open book decomposition is that this map is a submersion.

 Let's then suppose that $\dim V> 0$.
We have seen that, in case $\psi$ has an isolated critical point at $0\in \bR^m$, the canonical Whitney stratification $\cS$ defined at \ref{ss:str}
 consists of the following strata: $X \m (V \times \bP^{p-1})$,  $(V \m \{ 0\}) \times \bP^{p-1}$,  $\{ 0\} \times (\bP^{p-1} \m A)$ and the discrete points $(0,[s])$ where $[s]$ runs in some finite set $A\subset \bP^{p-1}$. More precisely, the set $A$ is a collection of finitely many points where the Whitney regularity conditions along the line $\{ 0\} \times \bP^{p-1}$ may eventually fail.

It follows from Whitney's finiteness theorem for germs of analytic sets, explained by Milnor in \cite[Appendix A]{Mi}, that there exists $\e_0 >0$ such that the sphere $S^{m-1}_\e$ is transversal to
$V$, for any $\e \le  \e_0$.
Take now such a small $\e_0$,
which in addition satisfies the criterion of Theorem \ref{t:main1}.
 This assumption precisely means that the sphere $S^{m-1}_\e \times \{[s]\}$ is
transversal to $X_{[s]}\m V$, for any $[s] \in \bP^{p-1}$ and for all $\e \le  \e_0$.
These facts, together with the observation that the subset  $\{ 0\} \times \bP^{p-1}$ does not intersect $S^{m-1}_\e \times \bP^{p-1}$, for $0< \e \le \e_0$, provide the proof of the following:

\begin{lemma}\label{l:basic}
 For any $\e\le \e_0$,  the ``torus'' $S^{m-1}_\e \times \bP^{p-1}$ is transversal to the strata of $\cS$. \fin
\end{lemma}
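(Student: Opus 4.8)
The plan is to verify transversality stratum by stratum, using the list of strata of $\cS$ recalled just before the lemma together with the two geometric facts established in the paragraph above: (i) $S^{m-1}_\e$ is transversal to $V$ in $\bR^m$ for all $\e\le\e_0$, and (ii) the criterion of Theorem \ref{t:main1} holds for $\e_0$, which by Remark \ref{r:cond} (and its higher-$p$ analogue) says that the map (\ref{eq:milnormap2}) is a submersion on $B_\e\m V$, equivalently that $S^{m-1}_\e\times\{[s]\}$ meets $X_{[s]}\m V$ transversally inside $B^m_{\e_0}\times\{[s]\}$ for every $[s]\in\bP^{p-1}$ and every $\e\le\e_0$. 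I would also record once and for all the trivial observation that $\bigl(\{0\}\times\bP^{p-1}\bigr)\cap\bigl(S^{m-1}_\e\times\bP^{p-1}\bigr)=\emptyset$ for $0<\e\le\e_0$, since the sphere avoids the origin.

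The strata split into three groups. First, the bottom-dimensional strata $\{0\}\times(\bP^{p-1}\m A)$ and the points $(0,[s])$ with $[s]\in A$: these all lie in $\{0\}\times\bP^{p-1}$, which the torus does not meet, so transversality is vacuous. Second, the middle stratum $(V\m\{0\})\times\bP^{p-1}$: here I would argue that the torus $S^{m-1}_\e\times\bP^{p-1}$ and this stratum are both ``products with $\bP^{p-1}$'' of subsets of $B^m_{\e_0}$, and their intersection is $(S^{m-1}_\e\cap(V\m\{0\}))\times\bP^{p-1}$; transversality in $X\m(\{0\}\times\bP^{p-1})$ then reduces, via the $\bP^{p-1}$-factor which is common, to transversality of $S^{m-1}_\e$ and $V\m\{0\}$ inside $\bR^m$, which is fact (i). (One must be slightly careful because the ambient space for transversality is $X$, not $B^m_{\e_0}\times\bP^{p-1}$; but near a point of $(V\m\{0\})\times\bP^{p-1}$ we may use that $X\m(\{0\}\times\bP^{p-1})$ is smooth by Corollary \ref{c:submanif}(a) and, again by the product structure of $X$ along $V$ — recall $X\supset V\times\bP^{p-1}$ and the blow-up description (\ref{eq:blowup}) — the relevant tangent spaces decompose compatibly with the $\bP^{p-1}$-direction.) Third, the top stratum $X\m(V\times\bP^{p-1})$: since this is an open dense submanifold of the smooth locus of $X$, transversality of the torus to it is automatic wherever it holds; the only thing to check is that at points of the torus lying on $X\m(V\times\bP^{p-1})$ the sum of tangent spaces is everything, and via the fibre-wise picture — the torus point lies in $S^{m-1}_\e\times\{[s]\}$ for a unique $[s]$, and $X\m(V\times\bP^{p-1})$ near it is $X_{[s]}\m V$ fibred over a neighbourhood of $[s]$ in $\bP^{p-1}$ — this reduces exactly to fact (ii), the submersivity of (\ref{eq:milnormap2}).

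Concretely, I would organize the write-up as: fix $\e\le\e_0$ and a point $z=(x,[s])\in(S^{m-1}_\e\times\bP^{p-1})\cap X$; necessarily $x\neq 0$, so $z\notin\{0\}\times\bP^{p-1}$ and $z$ lies in one of the strata $S_1:=X\m(V\times\bP^{p-1})$ or $S_2:=(V\m\{0\})\times\bP^{p-1}$. In either case $X$ is smooth at $z$ and locally a bundle over $\bP^{p-1}$ whose fibre over $[s']$ is $X_{[s']}$; write $T_zX=T_z X_{[s]}\oplus L$ where $L$ is a complement mapping isomorphically onto $T_{[s]}\bP^{p-1}$. The torus decomposes compatibly: $T_z(S^{m-1}_\e\times\bP^{p-1})=T_x S^{m-1}_\e\oplus L'$ with $L'$ mapping onto $T_{[s]}\bP^{p-1}$. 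Hence $T_z(S^{m-1}_\e\times\bP^{p-1})+T_z S = \bigl(T_x S^{m-1}_\e + T_z(S\cap X_{[s]})\bigr)\oplus T_{[s]}\bP^{p-1}$, and the problem collapses to showing $T_x S^{m-1}_\e + T_z(S\cap X_{[s]}) = T_z X_{[s]}$ inside the fibre. For $S=S_2$ this is transversality of $S^{m-1}_\e$ and $V\m\{0\}$, i.e. fact (i); for $S=S_1$ it is transversality of $S^{m-1}_\e\times\{[s]\}$ to $X_{[s]}\m V$, i.e. fact (ii).

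The main obstacle I anticipate is the bookkeeping in the second paragraph: making rigorous the claim that near the stratum $(V\m\{0\})\times\bP^{p-1}$ the space $X$, the torus, and the stratum all split as honest products (or at least as bundles) over $\bP^{p-1}$ in a way compatible enough that transversality genuinely reduces to the $\bR^m$-statement (i). This needs the smoothness input of Corollary \ref{c:submanif}(a) and the blow-up structure (\ref{eq:blowup}), and one should check that the identification $X\m(\{0\}\times\bP^{p-1})$ with a bundle over $\bP^{p-1}$ — with fibres $X_{[s]}\m(0,[s])$ — is smooth, which is why the hypothesis of isolated critical point (guaranteeing those fibres are submanifolds) is used. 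Everything else is routine once this product/bundle picture is in place, and the proof ends with \fin as the lemma is stated.
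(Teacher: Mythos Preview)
Your approach is the paper's: assemble the three facts listed in the paragraph immediately preceding the lemma (sphere transversal to $V$; the criterion of Theorem~\ref{t:main1} rephrased as $S^{m-1}_\e \times \{[s]\}$ transversal to $X_{[s]}\m V$; and $\{0\}\times\bP^{p-1}$ disjoint from the torus) and verify transversality stratum by stratum. The paper simply declares that these facts ``provide the proof'' and writes \fin; your case-by-case check is a correct, more explicit rendering of the same argument.

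One slip to correct in your concrete paragraph: the target of the reduced fibrewise problem should be $T_x\bR^m$, not $T_z X_{[s]}$. Transversality of the torus to a stratum $S$ takes place in the ambient $B^m_{\e_0}\times\bP^{p-1}$, so one needs $T_z(S^{m-1}_\e\times\bP^{p-1}) + T_z S = T_x\bR^m \oplus T_{[s]}\bP^{p-1}$. Since the torus already contributes the full factor $\{0\}\times T_{[s]}\bP^{p-1}$, what remains is that the first-factor projections span $T_x\bR^m$, i.e.\ $T_x S^{m-1}_\e + \mathrm{pr}_{\bR^m}(T_z S) = T_x\bR^m$. For $S=S_2$ the projection is $T_x V$ and this is exactly fact~(i). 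For $S=S_1$ the projection $\mathrm{pr}_{\bR^m}(T_z X)$ is already all of $T_x\bR^m$, because the blow-up map~(\ref{eq:blowup}) is an analytic isomorphism on the open stratum and hence $dp_z$ is bijective; so transversality to the open stratum is in fact automatic and does not literally require fact~(ii). Fact~(ii) is what is genuinely consumed at the next step, in the proof of Proposition~\ref{p:subm}, where one needs $\pi_|$ to be a submersion on the open stratum. Your invocation of fact~(ii) for $S_1$ is therefore sufficient but stronger than necessary; with the corrected target the reduction you wrote would yield a tautology there.
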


%In the remainder, we shall fix an $\e$ like in the above lemma.

A result from Whitney stratification theory \cite{Ma1, Ma2} tells that the transversal intersection of a manifold $M$
with a Whitney stratified space $Y$ induces a Whitney regular stratification on the intersection
$Y\cap M$. Applying this to $M= S^{m-1}_\e \times \bP^{p-1}$ we get that
our intersection $\cS \cap (S^{m-1}_\e \times \bP^{p-1})$ is a Whitney regular stratification
on $X \cap (S^{m-1}_\e \times \bP^{p-1})$ and we denote it by $\cS'$. Obviously, $\cS'$ has only two strata.

\begin{proposition}\label{p:subm}
If the criterion of Theorem \ref{t:main1} is satisfied, then for any small enough $\e$ the projection  $\pi_| : X\cap (S^{m-1}_\e \times \bP^{p-1}) \to \bP^{p-1}$ is a surjective stratified submersion relative to the Whitney stratification $\cS'$.
\end{proposition}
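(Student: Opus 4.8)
The plan is to prove the two assertions of Proposition \ref{p:subm} separately: first that $\pi_|$ is surjective onto $\bP^{p-1}$, and second that it is a stratified submersion relative to $\cS'$. Both will follow from the transversality statements already established in Lemma \ref{l:basic} together with the dimension count coming from the blow-up structure \eqref{eq:blowup}.

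For surjectivity, fix an arbitrary $[s] \in \bP^{p-1}$. The fibre $\pi_|^{-1}([s]) = X_{[s]} \cap (S^{m-1}_\e \times \{[s]\})$ always contains $K_\e \times \{[s]\}$ since $V \subset X_{[s]}$ and the sphere meets $V$ (for $\dim V > 0$ we are in the nonempty case, and transversality was arranged so $K_\e = S_\e \cap V$ is a nonempty submanifold of dimension $m-1-p \ge 0$). Hence $\pi_|$ is onto. Actually surjectivity is immediate; the substance is in the submersion claim.

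For the stratified submersion property, I would argue stratum by stratum of $\cS'$. Recall $\cS'$ has exactly two strata: the ``top'' one $T := (X \m (V\times\bP^{p-1})) \cap (S^{m-1}_\e \times \bP^{p-1})$ and the ``binding'' one $B := K_\e \times \bP^{p-1}$ (the latter being $((V\m\{0\})\times\bP^{p-1}) \cap (S^{m-1}_\e\times\bP^{p-1})$, as the strata $\{0\}\times(\bP^{p-1}\m A)$ and the points $(0,[s])$ are disjoint from $S^{m-1}_\e\times\bP^{p-1}$). On the stratum $B = K_\e \times \bP^{p-1}$ the projection $\pi_|$ is literally the product projection onto the second factor, hence a submersion trivially. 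On the top stratum $T$, I would use the isomorphism \eqref{eq:blowup}: via $p_|$, the point $(x,[s]) \in T$ corresponds to $x \in S^{m-1}_\e \m V$ with $[s] = [\psi(x)]$, i.e.\ $\pi_|$ composed with $p_|^{-1}$ is the map $x \mapsto [\psi(x)] = [\psi/\|\psi\|](x)$. So $\pi_||_T$ is a submersion at $(x,[s])$ if and only if the restriction of $\frac{\psi}{\|\psi\|}$ to the sphere $S^{m-1}_\e$ is a submersion at $x$. But this is exactly the content of the criterion of Theorem \ref{t:main1} together with Remark \ref{r:cond}: the criterion says $\frac{\psi}{\|\psi\|}$ is a submersion on $B_\e \m V$, which by the standard computation (the position vector $x$ being transverse to $S^{m-1}_\e$) is equivalent to the restriction $\frac{\psi}{\|\psi\|}|_{S^{m-1}_\e}$ being a submersion. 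Equivalently, and more in the spirit of the blow-up picture, the criterion says precisely that $S^{m-1}_\e\times\{[s]\}$ is transverse to $X_{[s]}\m V$ for all $[s]$, which is recorded in the discussion preceding Lemma \ref{l:basic}; combined with the fact that $X_{[s]}\m V$ is an open subset of the fibre $\pi^{-1}([s])$ which is itself smooth away from $V\times\bP^{p-1}$, transversality of the sphere-torus to the fibre gives the submersion.

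The step I expect to be the main obstacle — or at least the one requiring care rather than routine verification — is the passage from ``$\frac{\psi}{\|\psi\|}$ is a submersion on $B_\e\m V$'' to ``$\pi_||_T$ is a submersion'', i.e.\ checking that restricting to the sphere does not destroy the submersion property and that the identification through the blow-up $p_|$ respects this. This is where one must use that the radial direction is everywhere tangent to the level hypersurfaces of $\|x\|$ and nowhere tangent to $S^{m-1}_\e$, so the differential of $[\psi]$ restricted to $T_x S^{m-1}_\e$ still surjects onto $T_{[s]}\bP^{p-1}$; the only subtlety is a dimension bookkeeping near $x$ where $\psi(x)$ is small but nonzero. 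Once this local computation is in place, surjectivity and the product structure on the binding stratum finish the proof, and the resulting stratified submersion is what feeds into Ehresmann-type arguments in the sequel.
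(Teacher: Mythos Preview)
Your argument is correct and follows essentially the same route as the paper: check the submersion property stratum by stratum, using the product structure on $K_\e\times\bP^{p-1}$ and the criterion of Theorem~\ref{t:main1} (rephrased as transversality of $S^{m-1}_\e$ to $X_{[s]}\m V$) on the open stratum. Two minor differences are worth noting. First, the paper deduces surjectivity \emph{after} the submersion is established, from the fact that $\pi_|$ is proper (hence closed) and a stratified submersion (hence open); your direct argument via $K_\e\times\{[s]\}\subset\pi_|^{-1}([s])$ is fine too, under the standing assumption $\dim V>0$. Second, the ``main obstacle'' you flag is not really there: the criterion of Theorem~\ref{t:main1} is already a statement about the map \eqref{eq:milnormap2} restricted to $S^{m-1}_\e$, not to $B_\e\m V$, and the paper uses the equivalent transversality formulation directly --- so no passage from ball to sphere is needed, and your detour through the blow-up identification, while correct, can be shortcut.
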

\begin{proof}
The restriction $\pi_|$ is a submersion on the open stratum. Indeed, this is equivalent to the transversality of the sphere $S^{m-1}_\e$ to $\pi^{-1}([s]) \m V = X_{[s]}\m V$, for any $[s] \in \bP^{p-1}$, which is the theorem's hypothesis.
On the second stratum, this is obvious since this stratum is a product manifold $K \times\bP^{p-1}$ by its definition.
Now, since $\pi_|$ is a proper map (hence a closed map) and a stratified submersion (hence an open map) it follows that it is surjective. By the Thom Isotopy Theorem one gets that
 $\pi_| : X\cap (S^{m-1}_\e \times \bP^{p-1}) \to \bP^{p-1}$ is a stratified fibration.
The proof of the Thom Isotopy Theorem yields a continuous vector field which is tangent to the strata, it is $C^\ity$ on each stratum, and it is a lift
of the unit tangent vector field on $\bP^{p-1}$ (with some chosen fixed orientation). This vector field trivialises the fibration.
\end{proof}

 By Proposition \ref{p:subm}, the map $\pi_|$ is a locally trivial fibration on each stratum of $X\cap (S^{m-1}_\e \times \bP^{p-1})$. In particular, the restriction  $\pi_| : (X\m (V \times \bP^{p-1}) ) \cap (S^{m-1}_\e \times \bP^{p-1}) \to \bP^{p-1}$ to the open stratum
is a locally trivial fibration and one has a trivialising vector field $\bv$ at each point, which is a lift of the continuous unit tangent vector field on $\bP^{p-1}$.

Let $j: S^{p-1} \to \bP^{p-1}$ be the standard double covering projection.
The unit vector field on $\bP^{p-1}$ lifts to the oriented unit vector field on $S^{p-1}$.
 Let now consider the isomorphism $i: S_\e^{m-1} \m K \to X\cap [(S_\e^{m-1} \m K)\times \bP^{p-1}]$,
where $i(x) := (x, [\psi(x)])$,
and the commuting diagram:
\[
 \begin{array}{ccc}
S_\e^{m-1} \m K & \stackrel{\frac{\psi}{\| \psi \|}}{\longrightarrow} & S^{p-1} \\
\downarrow i & \ & \downarrow j  \\
X\cap [(S_\e^{m-1} \m K)\times \bP^{p-1}] & \stackrel{\pi}{\longrightarrow} & \bP^{p-1}
\end{array}
\]
 The vector field $\bv$ lifts by $i$ to a vector field $\bw$
which is exactly the lift by $\frac{\psi}{\| \psi \|}$ of the oriented unit tangent vector field
on $S^{p-1}$. Therefore $\bw$ provides our trivialization.

This ends the proof of Theorem \ref{t:main1}. \fin
%%%%%%%%
%%%%%%%%%%%%%%%

\begin{example}\label{e:2}

$\psi : (\mathbb R^3 ,0) \to (\mathbb R^2 ,0)$, $\psi = (P,Q)$, where:
\[ P= z(x^2+y^2+z^2), \ \ \ \ Q=y-x^3 .\]

The function $\psi$ has isolated singularity at origin and the link is $K_{\e}=\{z=0, y=x^3\}\cap S^{2}_{\e}$, i.e. two points.

We have $\nabla P(x,y,z)=(2xz,2yz,x^2+y^2+3z^2)$, $\nabla Q(x,y,z)=(-3x^2,1,0)$. Finding the set $M$ from (\ref{eq:main2}) amounts to computing the $2\times 2$ minors of the following $2\times 3$ matrix:
\[\left(
\begin{array}{ccc}
-x^2z(x^2+3y^2+3z^2)-2yxz & z(x^2+z^2-y^2)+2yzx^3 & -(y-x^3)(x^2+y^2+3z^2) \\
 x & y & z \\
\end{array}
\right)
\]
Pursuing the computations for some time we finally find that $\bar M \not\supset \{ 0\}$, which shows that
(\ref{eq:milnormap2}) yields an open book decomposition, by Theorem \ref{t:main1}.
\end{example}

\begin{remark}\label{r:thmain}
 In the case $p=2$ (see \S \ref{ss:case_p=2} for the notations), Jacquemard \cite{Ja1, Ja2} exhibited a sufficient condition for the existence of a ``strong Milnor fibration'' (\ref{eq:milnormap2}): in some small neighbourhood of the origin the vectors $\nabla P$ and $\nabla Q$ should not be parallel and they  should have equal order on any analytic path through the origin. The holomorphic functions clearly satisfy these conditions but the first examples of genuine real germs were given later in \cite{RSV}.
Next \cite{RS} provides another sufficient condition of different nature, based on Bekka's (c)-regularity. The authors show that their condition is weaker than Jacquemard's since the latter implies Verdier's (w)-regularity. Moreover, \cite{dS} gives another
sufficient condition, showing that it is weaker than the one in \cite{RS} and providing an example satisfying it but not the (c)-regularity requirement from \cite{RS}.
 Our Theorem \ref{t:main1} recovers all these results since it provides a necessary and sufficient condition. One may also check that our condition is weaker than the condition
given in \cite{dS}.
\end{remark}

\section{Weighted-homogeneous map germs}\label{weighted-hom}
%%%%%%%%%%%%%%%%%%%%%%%%%%%%%%%%%%%%%%%%
%%%%%%%%%%%%%%%%%%%%%%%%%%%%%%%%%%%%%%%%%
 We have seen up to now that Theorem \ref{t:main1} completely answers the question (A)
in case $\Sing \psi \subset \{ 0\}$. We focus below on the class of weighted-homogeneous map germs
and show that this answers affirmatively to both questions (A) and (B) in \S \ref{intro}.

We keep the notations introduced in \S \ref{intro} and \S \ref{prep}. One says that $\psi = (P_1, \ldots , P_p) : (\mathbb R^m ,0) \to (\mathbb R^p ,0)$ is a \textit{weighted-homogeneous map germ} of weights $(w_1, \ldots , w_m)\in \bN^m$ and of degree $\alpha \in \bN$ if all the functions $P_i$ are weighted-homogeneous of degree $\alpha$. Let us denote by $\gamma (x) := \sum_{j=1}^m w_j x_j \frac{\partial}{\partial x_j}= (w_1 x_1, \ldots , w_m x_m)$ the Euler vector field on $\bR^m$. By definition we have $\langle \nabla P_i(x), \gamma(x) \rangle = \alpha P_i(x)$ for all $i$.

With these notations, we prove the following result:
\begin{theorem}\label{t:quasi}
Let $\psi :(\mathbb{R}^{m},0) \to (\mathbb{R}^{p},0)$ be a weighted-homogeneous map germ with  isolated singularity. Then the two open book decompositions defined by (\ref{eq:milnormap1}) and
(\ref{eq:milnormap2}) exist and are equivalent.
\end{theorem}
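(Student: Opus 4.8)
The plan is to verify the two halves of the statement separately: first that the Milnor-type fibration (\ref{eq:milnormap1}) exists, and then that the explicit ``projectivised'' map (\ref{eq:milnormap2}) also defines an open book decomposition, after which equivalence will fall out of the construction used in the proof of Theorem \ref{t:main1}. By Fact 1, the fibration (\ref{eq:milnormap1}) already exists for any real analytic germ with isolated critical point, so the real content is (\ref{eq:milnormap2}) together with the equivalence. By Theorem \ref{t:main1}, to get (\ref{eq:milnormap2}) it suffices to check, for all small $\e>0$, that $\frac{\psi}{\|\psi\|} : S^{m-1}_\e \m K_\e \to S^{p-1}$ is a submersion; equivalently, that for every $x\in B_\e\m V$ the vector $\omega_i(x) := \nabla P_i(x)$ projected off the $\psi$-direction, and the position vector $x$, are not forced to be dependent. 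The cleanest route is to use the Euler vector field $\gamma(x)=(w_1x_1,\dots,w_mx_m)$ directly: one shows that on $S^{m-1}_\e\m K_\e$ the two vector fields $x$ and $\gamma(x)$ can be combined to produce a vector tangent to the sphere along which $\frac{\psi}{\|\psi\|}$ moves nontrivially in the required direction, which is precisely the submersion condition.

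Concretely, I would argue as follows. Fix $[s]\in\bP^{p-1}$ and work on the fibre $X_{[s]}$ of the blow-up $X$. Because $\psi$ is weighted-homogeneous of degree $\alpha$, the relation $\langle \nabla P_i(x),\gamma(x)\rangle = \alpha P_i(x)$ holds for all $i$, so $\gamma(x)$ is tangent to $X_{[s]}$ at every smooth point: indeed $\gamma$ annihilates each defining function $f_{ij}=s_jP_i - s_iP_j$ because $\langle \nabla f_{ij},\gamma\rangle = \alpha(s_jP_i - s_iP_j)=0$ on $X_{[s]}$. Thus the Euler field restricts to a vector field on $X_{[s]}\m V$ (away from the isolated singular point $0$, using Corollary \ref{c:submanif}), and it is transverse to every sphere $S^{m-1}_\e$ since $\langle \gamma(x),x\rangle = \sum w_j x_j^2 > 0$ for $x\neq 0$. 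Transversality of a sphere to $X_{[s]}\m V$ now follows: if the sphere were tangent to $X_{[s]}\m V$ at some point $x$, its normal $x$ would lie in the normal space of $X_{[s]}\m V$ at $x$, but $\gamma(x)$ is tangent to $X_{[s]}\m V$ and has positive inner product with $x$, a contradiction. Hence the criterion of Theorem \ref{t:main1} holds for all small $\e$, and (\ref{eq:milnormap2}) defines an open book decomposition of $S^{m-1}_\e$ with binding $K_\e$.

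For the equivalence of the two open books, I would lean on the proof of Theorem \ref{t:main1} itself, which exhibits the fibration (\ref{eq:milnormap2}) as the pullback, via the isomorphism $i(x)=(x,[\psi(x)])$ and the double cover $j: S^{p-1}\to\bP^{p-1}$, of the stratified fibration $\pi_|: X\cap (S^{m-1}_\e\times\bP^{p-1})\to\bP^{p-1}$; the trivialising vector field $\bw$ constructed there is the lift by $\frac{\psi}{\|\psi\|}$ of the unit tangent field on $S^{p-1}$. On the other hand, Milnor's fibration (\ref{eq:milnormap1}) is, per Fact 1, built by integrating a vector field that agrees with $\frac{\psi}{\|\psi\|}$ near the binding; in the weighted-homogeneous case the Euler field provides a \emph{global} such vector field on $S^{m-1}_\e\m K_\e$ whose $\psi$-component rotates at unit speed, so integrating it produces a fibration whose fibres are exactly the fibres of $\frac{\psi}{\|\psi\|}$. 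This identifies (\ref{eq:milnormap1}) with (\ref{eq:milnormap2}) up to the homeomorphism induced by $\psi/\|\psi\|$, giving the asserted equivalence.

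The main obstacle I anticipate is the equivalence step rather than the existence step: Fact 1 only pins down Milnor's fibration map near the binding, so one must genuinely check that the Euler field (suitably rescaled and, if necessary, patched near $K_\e$ with a field pointing away from $V$) is an admissible choice in Milnor's construction and that integrating it yields the level sets of $\psi/\|\psi\|$ rather than some twisted version. The weighted-homogeneity is exactly what makes this work — it forces the radial (Euler) direction and the ``argument'' direction of $\psi$ to be compatible — but writing the patching near the higher-codimension binding $K_\e$ carefully, so that the resulting trivialisation over $S^{p-1}$ matches the one from Theorem \ref{t:main1}, is where the real care is needed.
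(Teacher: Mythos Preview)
Your existence argument is correct and essentially identical to the paper's Step~1: the Euler field $\gamma$ is tangent to each $X_{[s]}$ and satisfies $\langle\gamma(x),x\rangle>0$, so every sphere is transverse to $X_{[s]}$ and Theorem~\ref{t:main1} applies.

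The equivalence step, however, contains a real confusion about the role of $\gamma$. You claim that $\gamma$ gives a vector field \emph{on} $S^{m-1}_\e\setminus K_\e$ whose ``$\psi$-component rotates at unit speed''. Neither assertion holds: since $\langle\gamma(x),x\rangle>0$ the Euler field is nowhere tangent to the sphere, and the Euler relation $\langle\nabla P_i,\gamma\rangle=\alpha P_i$ says that the flow of $\gamma$ multiplies $\psi$ by a positive scalar, so $\psi/\|\psi\|$ is \emph{constant} along its trajectories---$\gamma$ does not rotate the argument of $\psi$ at all. The paper exploits precisely these two facts, but in the opposite way from what you describe. Milnor's fibration (\ref{eq:milnormap1}) is built by first fibring the tube $\bar B_\e\cap\psi^{-1}(S^{p-1}_\eta)$ via $\psi$ and then \emph{inflating} the tube to the sphere along any vector field that points outward both with respect to $\|x\|$ and with respect to $\|\psi\|$. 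The Euler field qualifies because $\langle\gamma,x\rangle>0$ and $\langle\gamma,\nabla\|\psi\|^2\rangle>0$; and because $\gamma$ is tangent to the fibres of $\psi/\|\psi\|$, the inflation carries each $\psi$-fibre on the tube to the corresponding $\psi/\|\psi\|$-fibre on the sphere. This identifies (\ref{eq:milnormap1}) with (\ref{eq:milnormap2}) directly on $S^{m-1}_\e\setminus\psi^{-1}(B^p_\eta)$, and the tube fibration handles the neighbourhood of $K_\e$---no ad hoc patching is needed. So you have the right ingredient, but it is used radially (tube $\to$ sphere), not tangentially along the sphere.
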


\begin{proof}
\textit{Step 1.} As discussed in \S \ref{intro}, the existence of the first type open book decomposition is insured by Milnor's theorem \cite[p. 97]{Mi} since the map $\psi$ has isolated singularity.

 We next use Theorem \ref{t:main1} to prove the existence of the fiber bundle defined by the map (\ref{eq:milnormap2}). In order to show that our $\psi$ satisfies the criterion of Theorem \ref{t:main1} it is enough to prove that $X_{[s]}$ is transversal to the sphere $S^{m-1}_\e$ for any $[s]\in \bP^{p-1}$ and any small enough $\e >0$.
In other words, we need to show that $X_{[s]}$ is not tangent to the sphere. To do that, we shall first prove that the Euler vector field $\gamma(x)$ is tangent to $X_{[s]}$ for any fixed $[s]$ and then that the normal vector to the sphere is not orthogonal to $\gamma(x)$. This will clear our claim.

As shown in \S \ref{ss:sing}, the analytic set $X_{[s]}$ is defined by $p-1$
independent equations $f_{ij} := s_j P_i - P_j =0$, for $j\not= i$, and it is non-singular
at all points except of the origin.
Let $(x, [s])\in X_{[s]}$, with $x\not= 0$, in some chart $\{ s_i=1\}$ of $\bP^{p-1}$. Let $\nabla_x$ denote the gradient
with respect to the coordinates of $\bR^m$. Then we have:
\[ \langle \gamma(x),  \nabla_x f_{ij}(x) \rangle = \alpha f_{ij}(x) = 0
 \]
for all $j\not= i$, which means that $\gamma(x)$ is tangent to $X_{[s]}$.

We also have:
 \[ \langle \gamma(x),  x \rangle = \sum_i w_i x_i^2 > 0,
 \]
since $x \not= 0$. This shows that the vector $x$ cannot be orthogonal to the tangent space of $X_{[s]}$, hence the sphere $S^{m-1}_\e$ is indeed transversal to $X_{[s]}$, for any $[s]\in \bP^{p-1}$ and actually for any $\e >0$. Our claim is proved. Besides, the proof shows that
the size of the spheres  $S^{m-1}_\e$ is not important, due to the weighted-homogeneity.

\smallskip
\noindent
\textit{Step 2.}
 In the first part of his general proof of the existence of the first type open book decomposition, Milnor \cite[p. 97-98]{Mi} has shown the following fact:  since $\Sing \psi \subset \{ 0\}$, for all small enough $\eta >0$,  the restriction:
\begin{equation}\label{eq:tube}
 \psi_| : \bar B^m_{\e} \cap \psi^{-1}(\bar B^{p}_\eta \m \{ 0\}) \to \bar B^{p}_\eta \m \{ 0\}
\end{equation}
 is a fiber bundle, the fibre of which is a manifold with boundary. This fibration does not depend on the choices of $0< \e \le \e_0$ and of $0< \eta \ll \e$.

Then Milnor shows that the ``tube''
$\bar B^m_{\e} \cap \psi^{-1}(S^{p-1}_\eta)$ can be ``inflated'' to the sphere $S^{m-1}_{\e}$
by using a vector field without zeroes and pointing ``outwards''. This procedure induces a fibration
on $S^{m-1}_{\e} \m \psi^{-1}(B^{p}_\eta)$ which is isomorphic (up to diffeomorphism) to the fibration:
\begin{equation}\label{eq:tube2}
 \psi_| : \bar B^m_{\e} \cap \psi^{-1}(S^{p-1}_\eta) \to S^{p-1}_\eta
\end{equation}
 for any choice of the vector field with the good properties. These fibrations coincide
 on $S^{m-1}_{\e} \cap \psi^{-1}(S^{p-1}_\eta)$.

Milnor's proof quoted above is valid in whole generality, for maps $\psi$ with isolated singularity. In our quasi-homogeneous setting, one has the Euler vector field and it appears that this is particularly convenient.
Indeed, we have the following inequalities, for any $x\not\in V$:
\[\langle \gamma (x), x\rangle >0 , \]
\[\langle \gamma (x), \nabla (\| \psi(x) \|)^{2}\rangle >0 ,\]
where the former was proved just above and the latter is again due to the weighted-homogeneity.
 These inequalities show that the Euler vector field  $\gamma (x)$ is without zeroes and points outwards as required.
Moreover, the Euler vector field is tangent to the
fibres of the map $\frac{\psi}{\| \psi\|}$.

This shows that the fibration induced
by the ``inflating'' procedure is precisely the fibration (\ref{eq:milnormap2}).
 Therefore we have proved that the two types
of open book decompositions, (\ref{eq:milnormap1}) and (\ref{eq:milnormap2}), coincide
on $S^{m-1}_{\e} \m \psi^{-1}(B^{p}_\eta)$. Now the fibration (\ref{eq:tube}) shows that
on $S^{m-1}_{\e} \cap \psi^{-1}(\bar B^{p}_\eta)$ the map $\frac{\psi}{\| \psi\|}$
is a locally trivial fibration. This observation completes the open book decompositions.
\end{proof}

%%%%%%%%%%%%%%%%%%%%%%%%%%%%%%%%%%%%%%%%%%%%%%%%%%%%

%%%%%%%%%%%%%%%%%%%%%%%%%%%%%%%%%%%%%%%%%%%%%%%%%%%
%%%%%%%%%%%%%%%%%%%%%%%%%%%%%%%%%%%%%%%%%%%%%%%%%%%%

\begin{example}\label{e:3}
A'Campo's \cite{ac} example $\psi : (\mathbb{R}^4, 0) \to (\mathbb{R}^{2}, 0)$, $\psi(x,y,z,w)=(x(z^2+w^2)+z(x^2+y^2),y(z^2+w^2)+w(x^2+y^2))$ is weighted-homogeneous. By our Theorem \ref{t:quasi}, the fibration (\ref{eq:milnormap2}) on the 3-sphere given by $\frac{\psi}{\|\psi \|}$ is equivalent to Milnor's fibration (\ref{eq:milnormap1}).
\end{example}
%%%%%%%%%%%%%
%%%%%%%%%%%%%%%%%%%%%%%%%%%%%%%%%%%%%%%%
%%%%%%%%%%%%%%%%%%

\section{Isolated critical value}\label{critvalue}

We raise generality and address the questions (A) and (B) in the setting of maps
$\psi : (\bR^m, 0) \to (\bR^p,0)$ with an isolated critical value at $0\in \mathbb{R}^{p}$. We derive below some sufficient conditions for the existence of the open book decompositions (\ref{eq:milnormap1}) and (\ref{eq:milnormap2}).

As we have recalled in the proof of Theorem \ref{t:quasi}, Milnor's proof of the existence of the locally trivial fibration (\ref{eq:milnormap1}) contains two steps.
The first one needs that the map (\ref{eq:tube}) is a locally trivial fibration, for all $0< \e \le \e_0$ and $0< \eta \ll \e$.
The second step consists of ``inflating'' the tube to the sphere, as explained in the first part of Step 2 of the proof of Theorem \ref{t:quasi}. This can be done in exactly the same way whenever $\psi$ has an isolated critical value instead an isolated critical point. We then have proved the following result.

\begin{proposition}\label{p:isolcrtval}
Let $\psi$ have an isolated critical value at $0$. Suppose that (\ref{eq:tube})
is a locally trivial fibration. Then there exists an open book decomposition (\ref{eq:milnormap1}).
\fin
\end{proposition}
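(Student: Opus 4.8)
The plan is to reduce the statement to the two-step structure of Milnor's argument, which has already been isolated in the proof of Theorem \ref{t:quasi}, and to check that each step survives the weakening of the hypothesis from ``isolated critical point'' to ``isolated critical value''. First I would recall that, by hypothesis, the map (\ref{eq:tube}), namely
\[
 \psi_| : \bar B^m_{\e} \cap \psi^{-1}(\bar B^{p}_\eta \m \{ 0\}) \to \bar B^{p}_\eta \m \{ 0\},
\]
is already assumed to be a locally trivial fibration for the relevant choice of $0<\e\le\e_0$ and $0<\eta\ll\e$. So the first step of Milnor's construction is handed to us for free: it is exactly the content of the assumption. Restricting to the boundary of the target ball gives the fibration (\ref{eq:tube2}) over $S^{p-1}_\eta$, the fibre being a compact manifold with boundary sitting inside $\bar B^m_\e\cap\psi^{-1}(S^{p-1}_\eta)$.

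The substantive part is the ``inflating'' step: one must transport this tube fibration out to the sphere $S^{m-1}_\e$, producing a fibration on $S^{m-1}_\e\m\psi^{-1}(B^p_\eta)$ that matches (\ref{eq:tube2}) on the common part $S^{m-1}_\e\cap\psi^{-1}(S^{p-1}_\eta)$, and then to see that the remaining collar $S^{m-1}_\e\cap\psi^{-1}(\bar B^p_\eta)$ is fibred over $S^{p-1}$ by $\frac{\psi}{\|\psi\|}$ because of (\ref{eq:tube}). I would argue that Milnor's original construction of the outward-pointing, zero-free vector field used for the inflation (as in \cite[p.~97--98]{Mi}, recalled in Step~2 of the proof of Theorem \ref{t:quasi}) uses nothing about the critical set of $\psi$ except that on $\bar B^m_\e\m V$ one has the transversality of the spheres to $V$ (Whitney's finiteness theorem, valid here since $V$ is still an analytic germ) and the freedom to interpolate between the radial field and a field tangent to the fibres of $\psi/\|\psi\|$ in the region $\bar B^m_\e\m\psi^{-1}(B^p_\eta)$. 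None of this uses that $\psi$ has an isolated \emph{critical point}; it only uses that $\psi$ is a submersion on $\psi^{-1}(\bar B^p_\eta\m\{0\})\cap\bar B^m_\e$, which is precisely what ``isolated critical value'' guarantees. Hence the inflation goes through verbatim, and patching the two fibrations along $S^{m-1}_\e\cap\psi^{-1}(S^{p-1}_\eta)$ yields a locally trivial fibration $S^{m-1}_\e\m K_\e\to S^{p-1}$; identifying a tubular neighbourhood of $K_\e=S^{m-1}_\e\cap V$ with $B^2\times K_\e$ (again by transversality of the sphere to $V$) shows it is an open book decomposition in the extended sense of the Introduction.

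In short, the proof is: (i) invoke the hypothesis to get Step~1 of Milnor's argument; (ii) observe that Milnor's Step~2 (the inflating vector field and the patching on the sphere) is insensitive to the nature of $\Sing\psi$ and depends only on $\psi$ being a submersion off $V$ near $0$, which is the isolated-critical-value assumption; (iii) conclude that the resulting fibration on $S^{m-1}_\e\m K_\e$ is the open book decomposition (\ref{eq:milnormap1}). The main obstacle I anticipate is purely one of bookkeeping rather than of mathematical substance: one must be careful that, when $V$ is positive-dimensional and singular, the fibre of (\ref{eq:tube}) is a manifold with corners along $S^{m-1}_\e\cap\psi^{-1}(S^{p-1}_\eta)$, so the matching of the inflated fibration with the collar fibration has to be done with a smoothing of corners exactly as in Milnor's original treatment — this is standard but it is the one place where a little care is genuinely needed, and it is why the statement is phrased as an existence result via the explicit two-step reduction rather than by a one-line citation.
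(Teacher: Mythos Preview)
Your proposal is correct and follows essentially the same approach as the paper: the paper's proof is simply the short paragraph preceding the proposition, which observes that Step~1 of Milnor's construction is given by hypothesis and that Step~2 (the inflation to the sphere, as recalled in the proof of Theorem~\ref{t:quasi}) goes through unchanged when one only assumes an isolated critical value. Your write-up expands on this with more detail about why the inflation needs only the submersion property off $V$ and about the corner-smoothing, but the structure and content of the argument are the same.
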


\begin{remark}\label{r:ps}
It is well-known that if $V = \psi^{-1}(0)$ has a Whitney stratification $\displaystyle{\xi}$ such that in a sufficient small ball $B_{\epsilon}$, the pair $(B_{\epsilon}\setminus V, S)$ satisfies Thom's condition (a$_\psi$), for all $S\in \xi $, then (\ref{eq:tube}) is a locally trivial fibration. This fact attached to Proposition \ref{p:isolcrtval} yield a statement proved by Pichon and Seade \cite[Theorem 1.1]{PS}.
\end{remark}

In the complex setting $f: (\bC^n, 0) \to (\bC,0)$, L\^e D.T. \cite{Le} (see also \cite{HL}) showed that there exists a fibration in a thin tube around $V$ due to the Thom (a$_f$) property which is satisfied by
the holomorphic function germs. Moreover, the fibres of the map $\frac{f}{\| f\|}$ are transversal to small enough spheres and provide an open book decomposition, by
Milnor's proof in \cite{Mi}.

 For real maps, we have the following byproduct of the proof of Theorem \ref{t:main1}:

\begin{proposition}\label{p:isolcrt}
 Let $\psi$ have an isolated critical value at $0$. Suppose that, for any small enough $0< \eta \ll \e$, the map:
\begin{equation}\label{eq:tube3}
 \psi_| : S^{m-1}_{\e} \cap \psi^{-1}(\bar B^{p}_\eta \m \{ 0\}) \to \bar B^{p}_\eta \m \{ 0\}
\end{equation}
is a locally trivial fibration in a thin tube and
suppose that the criterion of Theorem \ref{t:main1} is satisfied. Then $\psi$ has a ``strong Milnor fibration'', i.e. (\ref{eq:milnormap2}) is an open book decomposition.
\end{proposition}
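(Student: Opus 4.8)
The plan is to combine the two halves of the proof of Theorem~\ref{t:main1} in the same way the complex case of L\^e is organized: the tube fibration (\ref{eq:tube3}) controls the behaviour near the binding $K_\e = S^{m-1}_\e \cap V$, while the submersion criterion controls the behaviour away from $K_\e$, and one glues the two. First I would fix $0<\eta \ll \e \le \e_0$ so small that both hypotheses hold simultaneously. Away from the tube, that is on $S^{m-1}_\e \m \psi^{-1}(B^p_\eta)$, the criterion of Theorem~\ref{t:main1} applies verbatim: I would run the construction of \S\ref{prep}, passing to the blow-up space $X$ and its canonical stratification $\cS$ (which exists here because $\psi$ has isolated critical value, by Corollary~\ref{c:submanif}(b) and Definition~\ref{d:str}), check that $S^{m-1}_\e \times \bP^{p-1}$ is transversal to the strata of $\cS$ \emph{over the region} $\{\|\psi\| \ge \eta\}$ — the only point to verify being transversality of $S^{m-1}_\e$ to $X_{[s]}\m V$, which is exactly the criterion — and then invoke Proposition~\ref{p:subm}, or rather its proof, to get that $\pi_|$ restricted to this region is a stratified submersion, hence a locally trivial fibration over $\bP^{p-1}$, with a trivialising vector field $\bv$ lifting the unit tangent field of $\bP^{p-1}$. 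Pulling back through the diagram with the double cover $j:S^{p-1}\to\bP^{p-1}$ and the embedding $i(x)=(x,[\psi(x)])$ exactly as at the end of the proof of Theorem~\ref{t:main1}, this shows $\frac{\psi}{\|\psi\|}$ is a locally trivial fibration on $S^{m-1}_\e \m \psi^{-1}(B^p_\eta)$.

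Next I would treat the collar $S^{m-1}_\e \cap \psi^{-1}(\bar B^p_\eta \m\{0\})$. This is precisely where hypothesis (\ref{eq:tube3}) enters: by assumption the map $\psi_|$ on this set is a locally trivial fibration over $\bar B^p_\eta\m\{0\}$. Composing with the radial retraction $\bar B^p_\eta \m\{0\} \to S^{p-1}_\eta \xrightarrow{\ \simeq\ } S^{p-1}$ one sees that $\frac{\psi}{\|\psi\|}$ is a locally trivial fibration on this collar as well; more to the point, $\frac{\psi}{\|\psi\|}$ is the composition of $\psi_|$ with a submersion, so it is itself a submersion on the collar and its fibres there are the preimages $\psi^{-1}(\text{ray})\cap S^{m-1}_\e$. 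The structure near $K_\e$ as a product $B^2\times K_\e$ — using $p\ge 2$, so codimension $p$ — comes from the local triviality of (\ref{eq:tube3}) near the boundary $\psi^{-1}(0)\cap S^{m-1}_\e$, which is how Milnor's argument already produces the open-book collar in Fact~1.

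Finally I would glue. On the overlap $S^{m-1}_\e \cap \psi^{-1}(S^{p-1}_\eta)$ both descriptions agree, since there $\frac{\psi}{\|\psi\|}$ equals $\frac{\psi_|}{\|\psi_|\|}$; I would patch the two trivialising vector fields using a partition of unity subordinate to the two open regions $\{\|\psi\|>\eta/2\}$ and $\{\|\psi\|<\eta\}$ of $S^{m-1}_\e \m K_\e$, noting that both vector fields are lifts of the (oriented) unit tangent field of $S^{p-1}$ so that any convex combination is still such a lift and still tangent to the fibres, hence nowhere zero after projection; integrating it trivialises $\frac{\psi}{\|\psi\|}$ globally on $S^{m-1}_\e \m K_\e$ and, together with the collar product structure from step two, exhibits the open book decomposition with binding $K_\e$.

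The main obstacle I expect is the gluing in the overlap region, i.e.\ making sure the vector field produced from the abstract tube fibration (\ref{eq:tube3}) and the one produced from the stratified-submersion argument can be interpolated while remaining transverse to the fibres of $\frac{\psi}{\|\psi\|}$ — a priori (\ref{eq:tube3}) only gives \emph{a} local trivialisation over $\bar B^p_\eta\m\{0\}$, not one compatible with the radial structure, so one must first homotope it, rel the subset $\{\|\psi\|=\eta\}$ where it already agrees, to one whose associated vector field projects to the unit tangent field of $S^{p-1}$; this is routine (the fibre of $\psi$ is connected enough, the base deformation retracts onto $S^{p-1}$) but it is the one place where some care is needed rather than a direct citation of Theorem~\ref{t:main1} or of Milnor's inflation argument.
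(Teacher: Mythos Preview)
Your proposal is correct and follows essentially the same two-region strategy as the paper: use the submersion criterion to handle $S^{m-1}_\e \m \psi^{-1}(B^p_\eta)$, use the tube fibration (\ref{eq:tube3}) to control a collar of $K_\e$, and glue. The paper's own proof is terser --- it takes the ``away from $K_\e$'' part for granted (since there $\frac{\psi}{\|\psi\|}$ is a proper submersion from a compact set, so Ehresmann applies directly without any blow-up or stratification machinery) and concentrates only on producing a controlled trivialising vector field near $K_\e$ by lifting through the local trivialisation of (\ref{eq:tube3}); your explicit partition-of-unity gluing and your discussion of compatibility on the overlap make the argument more transparent than the paper's sketch, but the underlying idea is the same.

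One small point: in your Step~1 you invoke the canonical stratification $\cS$ and Proposition~\ref{p:subm} over the region $\{\|\psi\|\ge\eta\}$, but this is heavier than needed --- on that region you are entirely inside the open stratum $X\m(V\times\bP^{p-1})$, the map is proper, and the criterion alone gives a submersion, so ordinary Ehresmann suffices. Also, the product neighbourhood of the binding should be $B^p\times K_\e$ (codimension $p$), not $B^2\times K_\e$.
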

\begin{proof}
The hypothesis of Theorem \ref{t:main1} insures that the fibres of the map
$\frac{\psi}{\| \psi\|}$ are transversal to the small spheres. In addition, we need to control the map $\frac{\psi}{\| \psi\|}$ at the points of the link $K$ (if this is non-empty) since $\frac{\psi}{\| \psi\|}$ is non-proper and one cannot apply Ehresmann's theorem. The device we may use in order to control it is the fibration structure of the map $\psi$ near $V$.

For having a controlled local trivialization of $\frac{\psi}{\| \psi\|}$
in the neighborhood of $K$, it is sufficient to see how this can be constructed locally. Take some tangent vector field $\bv$ defined on some small open subset $U$ of $S^{m-1}_{\e}$.
 Then extend it to the cone over $U$ minus its vertex at the origin
to the vector field $\bw(x) := (\rho_t)_*(\bv(x))$, where $\rho : S^{m-1}_{\e} \times (0, 1] \to \bar B^{p}_\eta \m \{ 0\}$, $\rho(x,t) := tx$ and $\rho_t(x):= \rho(x,t)$.

Then using a local trivialization of the fibration (\ref{eq:tube3}),
lift $\bw$ to a vector field in the tube such that it is tangent to the boundary tubes $S^{m-1}_\e \cap \psi^{-1}(S^{p-1}_\nu)$, for any $0< \nu \le \eta$.
Then the flow of $\bw$ provides a local trivialization of $\frac{\psi}{\| \psi\|}$.
\end{proof}

\begin{remark}
Again the existence of a Whitney stratification on $K\subset S^{m-1}_\e$
having Thom's (a$_\psi$) property for the restriction of $\psi$ to $S^{m-1}_\e \cap \psi^{-1}(\bar B^{p}_\eta)$ insures that (\ref{eq:tube3}) is a locally trivial fibration.
\end{remark}

%\subsection{More developments: 1. meromorphic germs and
%2. fibrations at infinity of polynomial maps.}

\bigskip\bigskip\bigskip

%%%%%%%%%%%%%%
%%%%%%%%%%%%%%%%%%%%%%

%%%%%%%%%%%%%%%%%%%%%%
%%%%%%%%%%%%%%%%%%%%%%
%%%%%%%%%%%%%%%

\end{document}